\DeclareMathOperator{\supp}{supp}
\DeclareMathOperator{\dist}{dist}
\newcommand{\TV}{{\textup{\textsf{TV}}}}
\newcommand{\ve}{{\textup{\textsf{v}}}}
\numberwithin{equation}{section}
\theoremstyle{plain}
\newtheorem{thm}[equation]{Theorem}
\theoremstyle{definition}
\theoremstyle{plain}
\newtheorem{lem}[equation]{Lemma}
\theoremstyle{definition}
\newtheorem{rem}[equation]{Remark}
\newtheorem*{acknowledgement*}{Acknowledgement}
\numberwithin{equation}{section}
\numberwithin{table}{section}
\numberwithin{figure}{section}
\DeclareMathOperator*{\argmin}{argmin}
\newcommand{\DIV}{{\textrm{div}}}   
\newcommand{\GRAD}{\nabla}           
\def\Lun{{L^1(\Omega)}}
\def\Ldeux{{{  L}^2   (\Omega)}}
\def\Hsp1d{{{   \bf H}^{s+1}   (\Omega)}}
\def\BV{BV(\Omega)}
\newcommand{\Real}{\mathbb R}
\newcommand{\diff}{\, \mbox{\rm d}}
\newcommand{\vare}{{\varepsilon}}
\newcommand{\dt}{{\Delta t}}
\newcommand{\ie}{i.e.,\@\xspace}
\newcommand{\eg}{e.g.\@\xspace}
\newcommand{\cf}{cf.\@\xspace}
\newcommand{\ue}{\textup{\textsf{u}}}
\def\scl{\left\langle}
\def\scr{\right\rangle}
\newcommand{\be}{{\bf e}}
\newcommand{\calC}{{\mathcal C}}
\newcommand{\calI}{{\mathcal I}}
\newcommand{\calN}{{\mathcal N}}
\newcommand{\calO}{{\mathcal O}}
\newcommand{\calT}{{\mathcal T}}
\newcommand{\polN}{{\mathbb N}}
\newcommand{\polP}{{\mathbb P}}
\newcommand{\polQ}{{\mathbb Q}}
\begin{document}
\title[TVD Interpolation]{A Total Variation Diminishing Interpolation Operator and Applications}

\author[R.H.~Nochetto]{Ricardo H.~Nochetto}
\address[R.H.~Nochetto]{Department of Mathematics and Institute for Physical Science and Technology,
University of Maryland, College Park, MD 20742, USA.}
\email{rhn@math.umd.edu}

\author[A.J.~Salgado]{Abner J.~Salgado}
\address[A.J.~Salgado]{Department of Mathematics, University of Maryland, College Park, MD 20742, USA.}
\email{abnersg@math.umd.edu}

\thanks{
This work is partially supported by NSF grants DMS-0807811 and DMS-1109325.
AJS is also partially supported by an AMS-Simons grant.
}

\keywords{Total Variation, Interpolation, Approximation, Finite Elements}

\subjclass[2010]{65D05; 
49M25;  
65K15;  
65M60;  
65N15;  
49J40   
}

\date{Submitted to Mathematics of Computation on \today.}

\begin{abstract}
We construct, on continuous $\polQ_1$ finite elements over Cartesian meshes,
an interpolation operator that does not increase the total variation.
The operator is stable in $L^1$ and exhibits second order approximation properties.
With the help of it we provide improved error estimates
for discrete minimizers of the total variation denoising problem and for
total variation flows.
\end{abstract}

\maketitle

\section{Introduction}
\label{sec:intro}
The approximation of weakly differentiable functions by polynomials is a very useful tool 
when trying to understand the behavior of such functions. In fact, S.L.~Sobolev himself \cite{MR0052039}
used a kind of averaged Taylor polynomial to discuss equivalent norms in the celebrated spaces
that now bear his name. Polynomial approximation also plays a crucial r\^ole in numerics,
as it is the basis of the analysis of finite element methods and discretization 
techniques of partial differential equations. It is of no surprise then, that there exists several
different constructions and that the approximation properties of such polynomials are well studied.
We refer the reader to \cite{MR1742264,MR0400739,MR2047080,MR559195,MR714693,MR2164092,MR1933037,MR1011446}
for a by no means exhaustive list that refers, mostly, to such results
in the finite element method context.

The quasitotality of the aforementioned works construct an operator that is well defined in $L^1(\Omega)$
(see Section~\ref{sec:notation} for notation) and is stable in $L^p(\Omega)$ for $p\geq1$. In addition,
their approximation properties are studied when the functions belong to certain Sobolev spaces.
However, in problems dealing with free discontinuities or connected with geometric
features such as minimization of coarea; mean curvature flows and others, it is most
natural to work in the space $\BV$ and so the need for an approximation theory by piecewise polynomials
in this space arises.


In prior work \cite{TVFlow} we dealt with such issue by first regularizing the function and
then taking the Lagrange interpolant of the regularization. By properly choosing the regularization
parameter as a function of the mesh size, we were able to obtain the properties that served our purposes.
However, much simpler arguments and better results would have been possible if a 
\emph{TV diminishing interpolant} was available. In other words, 
we wonder about an operator
\begin{equation}
  \Pi_h : \Lun \rightarrow X_h : \qquad \int_\Omega |\GRAD \Pi_h w | \leq \int_\Omega |\GRAD w|,
  \quad \forall w \in \BV,
\label{eq:tvd-def}
\tag{TVD}
\end{equation}
where we denote by $X_h$ the finite element space;
note that the stability constant in \eqref{eq:tvd-def} is $1$.
The purpose of this work is to construct such
an operator under the, rather stringent, geometric assumption that the underlying mesh is Cartesian
and the finite element space is made of continuous piecewise multilinear elements.

We organize our presentation as follows. Notation, functional and discrete spaces are introduced in
Section~\ref{sec:notation}. The core of our discussion is Section~\ref{sec:interpolant}, where we
construct our TV diminishing interpolant and prove its main properties
for periodic functions. The extension to non-periodic functions is in Section~\ref{sec:nonperiodic}.
Finally, some applications are discussed in Section~\ref{sec:Applications}; namely we provide error estimates
for total variation minimization and total variation flows.

\section{Notation and preliminaries}
\label{sec:notation}

In what follows we consider either $\Omega = S^1\times S^1$ (the two dimensional torus) or
$\Omega = (0,1)^2$ (the unit square).
Our arguments can be extended without difficulty to more space dimensions.
If $z \in \Real^2$ we denote its coordinates by $z = (z^1,z^2)$.

We denote by $L^p(\Omega)$ with $p\in[1,\infty]$ the space
of Lebesgue integrable functions with exponent $p$. Sobolev spaces will be denoted by
$W^s_p(\Omega)$, where $s \in \Real$ is the differentiability order.
Whenever $X$ is a normed space, we denote by $\|\cdot\|_X$ its norm.

We say that a function $w\in \BV$ (is of bounded variation), if $w\in \Lun$ and its derivative
in the sense of distributions is a Radon measure. To simplify notation, we will use $\GRAD w$ to 
denote such measure. We define the total variation (TV) by
\begin{equation}
  \TV( w ) = \int_\Omega |\GRAD w|,
\label{eq:BVseminorm}
\end{equation}
and we endow the space $\BV$ with the norm 
\[
  \| w \|_{BV} = \| w \|_{L^1} + \TV(w).
\]
We refer the reader to \cite{MR1857292,MR1014685} for the most relevant facts about such functions.

To carry out the finite element discretization, we introduce the number of points in each direction
$N_i \in \polN$, $i=1,2$. The mesh size in each direction is
\[
  h_i = \frac1{N_i}, \quad i=1,2.
\]
The nodes of the mesh are
\[
  z_{k,l}= (k h_1, l h_2), \quad k = \overline{0,N_1}, \ l = \overline{0,N_2}.
\]
The triangulation is
\[
  \calT = \{ T_{k,l} \}_{k=0,l=0}^{N_1-1,N_2-1},
\]
where
\[
  T_{k,l} = \left\{ z \in \Real^2:\ z_{k,l}^1 < z^1 < z_{k+1,l}^1, \ z_{k,l}^2 < z^2 < z_{k,l+1}^2 \right\}.
\]
Clearly $\bar\Omega = \cup_{T \in \calT} \bar T$.
On the basis of such triangulation we define
\begin{equation}
  X_h = \left\{ w_h \in \calC^0(\bar\Omega): \ w_h |_T \in \polQ_1, \ \forall T \in \calT \right\}.
\label{eq:defofFEM}
\end{equation}
Notice that, given the special structure of the mesh, if we denote by $\Lambda_{k,l}$ the
Lagrange basis function associated with node $z_{k,l}$, then
\[
  \Lambda_{k,l}(z) = \lambda_k^1(z^1) \lambda_l^2(z^2),
\]
where $\lambda_k^i$, $i=1,2$,
denotes the one-dimensional Lagrange basis function associated with node $z_k^i$ on a 
mesh of size $h_i$.

\section{TV diminishing interpolant: periodic domains}
\label{sec:interpolant}

In this Section we construct the TV diminishing interpolation operator in the torus,
$\Omega = S^1 \times S^1$, and prove some of
its approximation properties. It is based on a weighted averaged Taylor polynomial as in
\cite{MR1742264,MR0400739,MR2164092} but, to better exploit the symmetries related to our mesh, we use a very
specific weighting function.

To begin with, let $\chi_I$ be the characteristic function of the interval $I$ and set
\[
  \psi^i = \frac1{h_i} \chi_{[-h_i/2,h_i/2]}, \quad i=1,2.
\]
Notice that $\int \psi^i = 1$ and $\| \psi^i \|_{L^\infty} = 1/h_i$. The weighting function is then
\begin{equation}
  \psi(z) = \psi^1(z^1) \psi^2(z^2).
\label{eq:weight}
\end{equation}
Notice that
\begin{equation}
  Q = \supp \psi = \left[-\frac{h_1}2, \frac{h_1}2 \right] \times \left[-\frac{h_2}2, \frac{h_2}2 \right].
\label{eq:defofQ}
\end{equation}

Given a function $w\in \Lun$, similarly to \cite{MR1742264,MR0400739,MR2164092}, we define
\begin{equation}
  W_{k,l} = \int_Q w(z_{k,l} + \zeta) \psi(\zeta) \diff \zeta \in \Real.
\label{eq:defofWkl}
\end{equation}
The interpolant $\Pi_h w \in X_h$ is the unique function that satisfies
\begin{equation}
  \Pi_h w (z_{k,l}) = W_{k,l}.
\label{eq:defofPi}
\end{equation}

The stability of this operator is as follows.

\begin{lem}[$L^p$-stability]
The operator $\Pi_h: \Lun \rightarrow X_h$ is well defined and stable in any $L^p(\Omega)$,
$1\leq p \leq \infty$, namely
\begin{equation}
  \| \Pi_h w \|_{L^p} \leq \| w \|_{L^p}, \quad \forall w \in L^p(\Omega).
\label{eq:Lpstab}
\end{equation}
\end{lem}
\begin{proof}
It suffices to show \eqref{eq:Lpstab} for $p=1$ and $p=\infty$. The intermediate powers will
follow by function space interpolation.
Estimate \eqref{eq:Lpstab} for $p=\infty$ is evident. To obtain the
$L^1$-estimate we consider $T_{\kappa,\ell} \in \calT$ and
\[
  \| \Pi_h w \|_{L^1(T_{\kappa,\ell})} = 
  \int_{T_{\kappa,\ell}} \left| 
    \sum_{k=\kappa,l=\ell}^{\kappa+1,\ell+1} W_{k,l} \Lambda_{k,l}(z)
  \right| \diff z \leq
  \sum_{k=\kappa,l=\ell}^{\kappa+1,\ell+1} |W_{k,l}| \int_{T_{\kappa,\ell}} \Lambda_{k,l}(z) \diff z,
\]
Notice that, if $k = \kappa, \kappa+1$ and $l = \ell, \ell+1$,
\[
  \int_{T_{\kappa,\ell}} \Lambda_{k,l}(z) \diff z =
  \int_{z_{\kappa,\ell}^1}^{z_{\kappa+1,\ell}^1} \lambda_k^1(z^1) \diff z^1
  \int_{z_{\kappa,\ell}^2}^{z_{\kappa,\ell+1}^2} \lambda_l^2(z^2) \diff z^2 = \frac14 h_1 h_2.
\]
By definition \eqref{eq:defofWkl},
\[
  |W_{k,l}| \leq \frac{1}{h_1 h_2} \int_{z_{k,l}+Q} |w(\zeta)| \diff \zeta.
\]
From the considerations given above it follows that
\[
  \| \Pi_h w \|_{L^1} =
  \sum_{\kappa=0,\ell=0}^{N_1-1,N_2-1} \| \Pi_h w \|_{L^1(T_{\kappa,\ell})}
  \leq \frac14 \sum_{\kappa=0,\ell=0}^{N_1-1,N_2-1} \sum_{k=\kappa,l=\ell}^{\kappa+1,\ell+1}
  \int_{z_{k,l}+Q} |w(\zeta)| \diff \zeta,
\]
To conclude it suffices to notice that every vertex belongs to exactly four cells.
\end{proof}

The main result of this contribution is the following.

\begin{thm}[TV diminishing: periodic case]
\label{thm:tvd}
The operator $\Pi_h: \Lun \rightarrow X_h$ defined in \eqref{eq:defofPi} satisfies \eqref{eq:tvd-def}.
\end{thm}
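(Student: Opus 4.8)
The plan is to factor the operator as $\Pi_h = I_h \circ M$, where $M$ denotes convolution against the weight, $Mw := \psi * w$, and $I_h$ is ordinary $\polQ_1$ nodal (bilinear) interpolation; indeed, since $\psi$ is even, the nodal values \eqref{eq:defofWkl} are exactly $W_{k,l} = (\psi * w)(z_{k,l})$. I would then aim for $\TV(\Pi_h w) \le \TV(\psi * w) \le \TV(w)$, treating the two operations in turn, while being careful that neither step may be replaced by a direction-by-direction estimate: the anisotropic bound $|\GRAD u| \le |\partial_1 u| + |\partial_2 u|$ already overshoots the sharp constant $1$ (e.g. on an affine field it produces $|\alpha| + |\beta|$ in place of $\sqrt{\alpha^2 + \beta^2}$), so the Euclidean norm of the gradient must be kept intact throughout.

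The mollification step is the clean one. Since $\psi \ge 0$ with $\int \psi = 1$ and $\GRAD(\psi * w) = \psi * \GRAD w$, writing the vector measure in polar form $\GRAD w = \sigma\,|\GRAD w|$ with $|\sigma| = 1$ gives the pointwise bound $|(\psi * \GRAD w)(z)| \le (\psi * |\GRAD w|)(z)$; integrating yields $\int_\Omega |\psi * \GRAD w| \le \int_\Omega |\GRAD w| = \TV(w)$. This uses only that $\psi$ is a nonnegative probability kernel, and it preserves the Euclidean norm, so $\TV(\psi * w) \le \TV(w)$.

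For the interpolation step I would compute $\GRAD \Pi_h w$ cell by cell. On $T_{\kappa,\ell}$ the derivative $\partial_1 \Pi_h w$ is affine in the transverse variable alone and $\partial_2 \Pi_h w$ is affine in the other, so $\int_{T_{\kappa,\ell}} |\GRAD \Pi_h w|$ equals the integral over the reference square of $\sqrt{X(\eta)^2 + Y(\xi)^2}$ with $X, Y$ affine. Using convexity of this integrand (it lies below the chord joining its endpoint values) successively in each variable, I would bound the cell integral by $\tfrac14 h_1 h_2$ times the sum of the four \emph{nodal} one-sided discrete-gradient magnitudes, each of the form $\big(((W_{k\pm1,l}-W_{k,l})/h_1)^2 + ((W_{k,l\pm1}-W_{k,l})/h_2)^2\big)^{1/2}$. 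Summing over the mesh then groups, around each node, the four cells sharing it. Finally each scalar difference $W_{k+1,l} - W_{k,l}$, resp. $W_{k,l+1} - W_{k,l}$, is rewritten through the fundamental theorem of calculus and Fubini as a nonnegative weighted average of $\partial_1 w$, resp. $\partial_2 w$, so that the node sum should reassemble into $\psi * \GRAD w$ and close the estimate via the bound of the previous paragraph.

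The main obstacle is precisely this last reassembly with the Euclidean norm. Because the total variation couples the two directions, and because the two difference quotients entering a nodal gradient are averages of $\partial_1 w$ and of $\partial_2 w$ over \emph{different} regions (one elongated in $z^1$, the other in $z^2$), one cannot simply pull the Euclidean norm inside a single average; this is exactly where the very specific weight — the tensor-product box $Q$ of one full cell, centered at the node — must be used, so that the four one-sided contributions around each node are symmetric and recombine into the isotropic convolution $\psi * \GRAD w$ rather than into the lossy anisotropic sum. Periodicity is what makes the summation telescope without boundary remainders, the extension to $\Omega = (0,1)^2$ being deferred to Section~\ref{sec:nonperiodic}.
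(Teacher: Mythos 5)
Your factorization $\Pi_h = I_h\circ(\psi\ast\cdot)$, the mollification bound $\TV(\psi\ast w)\le\TV(w)$, and the cell-by-cell Hermite--Hadamard estimate are all correct, but the step you yourself flag as ``the main obstacle''---recombining the four one-sided nodal discrete gradients into the isotropic convolution $\psi\ast\GRAD w$---is not a technicality that the symmetry of $Q$ will repair: it is where the argument fails, and it cannot be fixed, because the inequality you are driving at, $\TV(\Pi_h w)\le\TV(\psi\ast w)$ with the Euclidean norm and constant $1$, is false. The two entries of a nodal discrete gradient are averages of $\partial_1 w$ and $\partial_2 w$ against \emph{different} kernels (one supported on a box elongated in $z^1$, the other in $z^2$), so Jensen/Minkowski cannot pull the Euclidean norm inside a single average---and no argument can, as the following example shows. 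Take $h_1=h_2=h$ small and let $w$ be the characteristic function of the diagonal strip $\{h/2<z^1+z^2<h/2+\tfrac12\}$ in the torus, so that the two interfaces have disjoint transition layers. The nodal averages \eqref{eq:defofWkl} depend only on $m=k+l$ and across each interface take the values $\dots,0,\tfrac18,\tfrac78,1,\dots$, so exactly four diagonals of cells carry a nonzero gradient of $\Pi_h w$; summing their contributions (each diagonal contains $N=1/h$ congruent cells, each contributing $h$ times a reference integral) gives, per interface,
\[
  2\int_{[0,1]^2}\Bigl(\bigl(\tfrac18+\tfrac58\xi\bigr)^2+\bigl(\tfrac18+\tfrac58\eta\bigr)^2\Bigr)^{1/2}\diff\xi\diff\eta
  +\frac14\int_{[0,1]^2}\bigl(\xi^2+\eta^2\bigr)^{1/2}\diff\xi\diff\eta\approx 1.481,
\]
while each interface contributes exactly $\sqrt2\approx1.414$ to $\TV(w)$. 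Hence $\TV(\Pi_h w)>\TV(w)\ge\TV(\psi\ast w)$: the isotropic TVD property with constant $1$ does not hold for this operator, and your chain of inequalities breaks at its first link.

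The paper's proof goes precisely down the road you ruled out in your opening paragraph: it is a direction-by-direction estimate. By the tensor-product structure, $\partial_1\Pi_h w$ on a cell is the convex combination, with weights $\lambda_l^2$ and $1-\lambda_l^2$, of the difference quotients $(W_{k+1,l}-W_{k,l})/h_1$ and $(W_{k+1,l+1}-W_{k,l+1})/h_1$; these are rewritten via the fundamental theorem of calculus as averages of $\partial_1 w$, the absolute value is taken \emph{inside} the integrals, and the sum over cells telescopes (change of variables in $s$, periodicity, $\int\psi^1=1$, $\|\psi^2\|_{L^\infty}=1/h_2$), yielding $\int_\Omega|\partial_1\Pi_h w|\le\int_\Omega|\partial_1 w|$ and, symmetrically, the analogous bound for $\partial_2$; a density argument (strict convergence of smooth functions in $\BV$) removes the $\calC^1$ assumption. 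What this actually establishes is \eqref{eq:tvd-def} with constant $1$ for the coordinate-wise (anisotropic) total variation $\int_\Omega|\partial_1 w|+\int_\Omega|\partial_2 w|$, and only with constant $\sqrt2$ for the Euclidean one. Your objection to directional splitting---that it overshoots the sharp Euclidean constant---is therefore mathematically sound, but the correct conclusion is the opposite of the one you drew: the sharp constant-$1$ statement is available only in the anisotropic reading of $|\GRAD w|$, and the strictly stronger isotropic inequality you set out to prove is unattainable.
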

\begin{proof}
Notice that, thanks to the particular structure of the mesh, the basis functions satisfy
\[
  \partial_1 \Lambda_{k,l} = \left( \lambda_k^1 \right)' \lambda_l^2 = \pm \frac1{h_1} \lambda_l^2.
\]

If $w\in \calC^1(\Omega)$ and $T=T_{k,l} \in \calT$, then
\begin{equation}
  \begin{aligned}
      \partial_1 \Pi_h w|_T &= W_{k,l} \left(\frac{-1}{h_1}\right) \lambda_l^2
      + W_{k+1,l} \left(\frac{1}{h_1}\right) \lambda_l^2 \\
      & + W_{k,l+1} \left(\frac{-1}{h_1}\right) (1-\lambda_l^2)
      + W_{k+1,l+1} \left(\frac{1}{h_1}\right) (1-\lambda_l^2) \\
      &= \frac1{h_1} \left(  W_{k+1,l} -  W_{k,l} \right) \lambda_l^2
      + \frac1{h_1} \left(  W_{k+1,l+1} -  W_{k,l+1} \right) (1-\lambda_l^2),
  \end{aligned}
\label{eq:sharp}
\end{equation}
where we have used that, for every $l$, we have $\lambda_l^2 + \lambda_{l+1}^2 \equiv 1$ on
$[z_{k,l}^2,z_{k,l+1}^2]$.
In view of the definition of $W_{k,l}$, given in \eqref{eq:defofWkl}, we can write
\begin{align*}
  W_{k+1,l} -  W_{k,l} &=
  \int_Q \left( w(z_{k+1,l} + \zeta) - w(z_{k,l} + \zeta) \right) \psi(\zeta) \diff \zeta \\
  &= h_1 \int_Q \int_0^1 \partial_1 w (s z_{k+1,l} + (1-s) z_{k,l} + \zeta) \psi(\zeta) \diff s \diff \zeta.
\end{align*}
Notice that, at this point, it was essential to have a $\calC^1$ function.
Notice also that
\[
  \int_T \lambda_l^2(z^2) \diff z = \int_T (1-\lambda_l^2(z^2)) \diff z = \frac12 |T| = \frac12 h_1 h_2.
\]
With these observations we estimate the local variation in the first coordinate direction
of the interpolant as
\begin{align*}
  \int_T |\partial_1 \Pi_h w(z)| \diff z &\leq
  \frac12 h_1 h_2 \int_Q \int_0^1
  \left| \partial_1 w (s z_{k+1,l} + (1-s) z_{k,l} + \zeta) \right| \psi(\zeta) \diff s \diff \zeta \\
  &+ \frac12 h_1 h_2 \int_Q \int_0^1
  \left| \partial_1 w (s z_{k+1,l+1} + (1-s) z_{k,l+1} + \zeta) \right| \psi(\zeta) \diff s \diff \zeta.
\end{align*}
The total variation in the first coordinate direction is then
\begin{align*}
  \int_\Omega |\partial_1 \Pi_h w | &= 
  \sum_{l=0}^{N_2-1}\sum_{k=0}^{N_1-1} \int_{T_{k,l}} |\partial_1 \Pi_h w(z)| \diff z\\ 
  &\leq 
  \frac12 h_1 h_2 \int_Q \sum_{l=0}^{N_2-1}\sum_{k=0}^{N_1-1} \int_0^1 \left(
      \left| \partial_1 w (s z_{k+1,l} + (1-s) z_{k,l} + \zeta) \right|
  \right. \\
  &\left.
    + \left| \partial_1 w (s z_{k+1,l+1} + (1-s) z_{k,l+1} + \zeta) \right|
  \right) \psi(\zeta)  \diff s \diff \zeta.
\end{align*}
Introduce the change of variables
$(0,1) \ni s \mapsto \sigma_k \in (z_{k,l}^1+\zeta^1,z_{k+1,l}^1+\zeta^1)$ given by
\[
  \sigma_k = s z_{k+1,l}^1 + (1-s)z_{k,l}^1 + \zeta^1,
\]
and notice that with it we can add over $k$ to obtain
\begin{align*}
  \int_\Omega |\partial_1 \Pi_h w | &\leq
  \frac12 h_2 \int_Q \sum_{l=0}^{N_2-1} \left(
  \int_0^1 |\partial_1 w(\sigma,\zeta^2 + lh_2) | \diff \sigma 
  \right. \\
  & \left.
  + \int_0^1 | \partial_1 w (\sigma,\zeta^2 + (l+1)h_2 )| \diff \sigma
  \right) \psi(\zeta) \diff\zeta \\
  &\leq h_2 \int_{-h_1/2}^{h_1/2} \psi^1(\zeta^1) \diff \zeta^1
  \sum_{l=0}^{N_2-1} \int_{-h_2/2}^{h_2/2} \int_0^1 | \partial_1 w (\sigma, \zeta^2 + lh_2)| 
  \psi^2(\zeta^2) \diff \sigma \diff \zeta^2,
\end{align*}
where, to arrive at the last inequality, we have used the definition of $\psi$ given in \eqref{eq:weight}.
Finally, we recall that $\| \psi^2 \|_{L^\infty} = \frac1{h_2}$, $\int \psi^1 =1$, and
\[
  \sum_{l=0}^{N_2-1} \int_{-h_2/2}^{h_2/2} \int_0^1 | \partial_1 w (\sigma, \zeta^2 + l h_2)| \diff \sigma
  \diff \zeta^2
  = \int_\Omega | \partial_1 w |,
\]
to conclude the proof for a $\calC^1$ function $w$.

In general, \ie $w\in\BV \setminus \calC^1(\Omega)$, we use an approximation argument.
We recall \cite[Theorem 5.3.3]{MR1014685} that smooth functions are dense in $\BV$ under strict
convergence. In other words,
there is a sequence $\{w_n\}_{n\in\polN} \subset \calC^\infty(\Omega)$ such that
\[
  \lim_{n\rightarrow \infty} \| w_n - w \|_{L^1} = 0,
  \qquad
  \limsup_{n \rightarrow \infty} \TV(w_n) \leq \TV(w).
\]
Since $\TV(\Pi_h w_n) \leq \TV(w_n)$ for all $n \in \polN$, we deduce
\[
  \TV(\Pi_h w ) = \lim_{n\rightarrow \infty} \TV(\Pi_h w_n) \leq
  \liminf_{n\rightarrow} \TV(w_n) \leq \TV(w),
\]
and thus conclude the proof.
\end{proof}

The approximation properties of this operator are summarized in the following.

\begin{thm}[Approximation]
\label{thm:approx}
Let $h = \max\{ h_i: \ i=1,2 \}$.
If $w \in \BV$, then
\begin{equation}
  \| w - \Pi_h w \|_{L^1} \leq c h \ \TV(w),
\label{eq:BVoptimalapprox}
\end{equation}
with $c>0$ a geometric constant.
If $w \in \BV \cap L^p(\Omega)$ with $p>1$, then for all $q\in [1,p]$,
\begin{equation}
  \| w - \Pi_h w \|_{L^q} \leq c h^{1-s} \TV(w)^{1-s} \| w \|_{L^p}^s,
  \quad \frac1q = \frac{1-s}1 + \frac{s}{p}.
\label{eq:Lqapprox}
\end{equation}
If $w\in W^2_p(\Omega)$ with $1\leq p \leq \infty$, then
\begin{equation}
    \| w - \Pi_h w \|_{L^p} \leq c h^2 |w|_{W^2_p}.
\label{eq:secondorder}
\end{equation}
\end{thm}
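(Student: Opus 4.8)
The plan is to establish the three bounds by the classical localize--reproduce--stabilize argument, exploiting two facts about $\Pi_h$ that are immediate from \eqref{eq:defofWkl}. Since $\psi$ is even in each variable, $\int_Q \zeta^i \psi(\zeta)\diff\zeta = 0$, so $W_{k,l} = p(z_{k,l})$ whenever $p \in \polP_1$ (indeed whenever $p\in\polQ_1$); consequently $\Pi_h$ reproduces affine functions, $\Pi_h p = p$ for all $p \in \polP_1$. The second ingredient is a \emph{local} version of the stability Lemma: writing $\tilde T$ for the $2h_1 \times 2h_2$ cell centered at the barycenter of $T = T_{k,l}$ (so that $\tilde T$ is exactly the union of the four averaging boxes $z_v + Q$ attached to the vertices $v$ of $T$, understood periodically), the same computation as in the proof of that Lemma yields $\|\Pi_h w\|_{L^p(T)} \le c \|w\|_{L^p(\tilde T)}$ with $c$ geometric. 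Since each point of $\Omega$ lies in a bounded number of patches $\tilde T$, summing local estimates will reproduce the global bounds.

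For \eqref{eq:BVoptimalapprox} I would work cell by cell. Let $\bar w_T$ denote the mean value of $w$ over $\tilde T$. Using that $\Pi_h$ reproduces the constant $\bar w_T$, I split
\[
  \|w - \Pi_h w\|_{L^1(T)} \le \|w - \bar w_T\|_{L^1(T)} + \|\Pi_h(w - \bar w_T)\|_{L^1(T)}.
\]
The first term is controlled by the Poincar\'e inequality for $\BV$ functions on the convex set $\tilde T$, giving $\|w - \bar w_T\|_{L^1(\tilde T)} \le c\, h \int_{\tilde T}|\GRAD w|$; the second is bounded, via local $L^1$-stability, by $c\|w - \bar w_T\|_{L^1(\tilde T)}$ and hence by the same Poincar\'e quantity. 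Summing over the cells of $\calT$ and using the finite overlap of the patches yields \eqref{eq:BVoptimalapprox}.

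Estimate \eqref{eq:Lqapprox} then follows with no further work on $\Pi_h$. Indeed, the stability Lemma gives $\|w - \Pi_h w\|_{L^p} \le 2\|w\|_{L^p}$, and the Lebesgue interpolation inequality $\|f\|_{L^q} \le \|f\|_{L^1}^{1-s}\|f\|_{L^p}^{s}$, valid exactly for the stated relation between $q$, $s$ and $p$, combines this with \eqref{eq:BVoptimalapprox} to produce $\|w - \Pi_h w\|_{L^q} \le c\,(h\,\TV(w))^{1-s}\|w\|_{L^p}^{s}$.

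Finally, \eqref{eq:secondorder} is the same localization as for \eqref{eq:BVoptimalapprox}, but now I replace the constant $\bar w_T$ by a first-order (affine) averaged Taylor polynomial $\pi_T \in \polP_1$ of $w$ over $\tilde T$. Because $\Pi_h \pi_T = \pi_T$, the decomposition
\[
  \|w - \Pi_h w\|_{L^p(T)} \le \|w - \pi_T\|_{L^p(T)} + c\|w - \pi_T\|_{L^p(\tilde T)}
\]
holds, and the Bramble--Hilbert lemma bounds $\|w - \pi_T\|_{L^p(\tilde T)} \le c\,h^2 |w|_{W^2_p(\tilde T)}$; summation over $\calT$ gives the claim. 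I expect the only genuine difficulty to be bookkeeping the constants so that they are truly \emph{geometric}: the Poincar\'e and Bramble--Hilbert constants on $\tilde T$ must be rendered scale invariant by mapping $\tilde T$ to a fixed reference rectangle, after which they depend only on the aspect ratio $h_1/h_2$ and not on $h$. Controlling this anisotropic dependence, together with verifying that the local stability constant is independent of $h$, is where the real care is needed, whereas the reproduction and overlap arguments are routine.
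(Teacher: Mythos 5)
Your proposal is correct, and for two of the three estimates it coincides with the paper's own argument: the paper also deduces \eqref{eq:Lqapprox} from the $L^p$-stability lemma, the $L^1$ bound \eqref{eq:BVoptimalapprox} and the Lebesgue interpolation inequality, and also derives \eqref{eq:secondorder} from the evenness of $\psi$ (so that $W_{k,l}=w(z_{k,l})$ for $w\in\polP_1$, hence $\Pi_h$ reproduces affine functions) combined with a Bramble--Hilbert argument. The genuine divergence is in \eqref{eq:BVoptimalapprox}. There the paper runs a direct computation that mirrors its proof of Theorem~\ref{thm:tvd}: on each cell it splits $|w-\Pi_h w|\le |w-W_{k_0,l_0}|+|W_{k_0,l_0}-\Pi_h w|$ for a fixed vertex $z_{k_0,l_0}$, bounds both terms for $w\in\calC^1(\Omega)$ by line integrals of $|\GRAD w|$ weighted by the kernel $\psi$, sums over cells, and finally passes to general $w\in\BV$ by strict convergence of smooth functions. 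You instead run the classical Cl\'ement-style argument: reproduction of constants, local $L^1$-stability on the patch $\tilde T$, the Poincar\'e inequality for $\BV$ functions on $\tilde T$, and finite overlap of the patches. Both routes work; yours is more modular --- the density step is absorbed into the $\BV$ Poincar\'e inequality, and the same localization skeleton handles \eqref{eq:BVoptimalapprox} and \eqref{eq:secondorder} uniformly with $\polP_0$, respectively $\polP_1$, reproduction --- whereas the paper's is self-contained and reuses the explicit kernel estimates it has already set up. As for the one difficulty you flag, it dissolves on inspection: the patches $\tilde T$ are \emph{convex} rectangles, so the $L^1$ Poincar\'e constant is bounded by $\diam(\tilde T)/2\le\sqrt{2}\,h$ independently of the aspect ratio $h_1/h_2$, and on convex domains a Bramble--Hilbert bound with shape-independent constant is available (or one can use a coordinate-wise averaged Taylor construction on the tensor-product patch); moreover, since the four boxes $z_v+Q$ tile $\tilde T$, the computation in the paper's stability lemma gives your local stability with the explicit constant $\tfrac14$, uniformly in $h$.
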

\begin{proof}
For \eqref{eq:secondorder} we use the symmetries of the mesh and the averaging procedure. In other words,
if $w \in \polP_1$, then $w(z_{k,l}) = W_{k,l}$ and $\Pi_h w = w$.
This, together with an argument \emph{\`a la} Bramble-Hilbert, implies the result.

Estimate \eqref{eq:Lqapprox} follows from \eqref{eq:Lpstab} and \eqref{eq:BVoptimalapprox} 
using the well-known function space interpolation inequality \cite{MR2328004}
\[
  \| w \|_{L^q} \leq \| w \|_{L^1}^{1-s} \| w \|_{L^p}^s,
  \quad \frac1q = \frac{1-s}1 + \frac{s}{p}.
\]

We only sketch the proof of the first statement, as the arguments are similar to those used to obtain
Theorem~\ref{thm:tvd}. We write
\[
  \| w - \Pi_h w \|_{L^1} = \sum_{\kappa,\ell=0}^{N_1-1,N_2-1} \int_{T_{\kappa,\ell}} | w - \Pi_h w |,
\]
and estimate the local differences as
\[
  | w - \Pi_h w | \leq | w - W_{k_0,l_0}| + |W_{k_0,l_0} - \Pi_h w |,
\]
where $(k_0,l_0)$ is such that $z_{k_0,l_0}$ is a vertex of $T_{\kappa,\ell}$.
Provided that $w \in \calC^1(\Omega)$ the bound on the first term is immediate, for if $z\in T_{\kappa,\ell}$,
\begin{align*}
 | w(z) - W_{k_0,l_0}| &= 
 \left | \int \left( w(z) - w(z_{k_0,l_0} + \zeta ) \right)\psi(\zeta) \diff \zeta \right| \\
 &\leq c h \int_Q \int_0^1 \left| \GRAD w (sz + (1-s)(z_{k_0,l_0}+\zeta)) \right|
 \psi(\zeta) \diff s \diff \zeta,
\end{align*}
so that
\[
  \sum_{\kappa,\ell=0}^{N_1-1,N_2-1} \int_{T_{\kappa,\ell}} | w - W_{k_0,l_0} |
  \leq c h \int_\Omega |\GRAD w |,
\]
because $\| \psi \|_{L^\infty} = (h_1h_2)^{-1} = |T_{\kappa,\ell}|^{-1}$.
For the second term we recall that the basis functions form a partition of unity, whence
\begin{align*}
  |W_{k_0,l_0} - \Pi_h w(z)| &= 
  \left|W_{k_0,l_0} 
      \sum_{k=\kappa,l=\ell}^{\kappa+1,\ell+1} \Lambda_{k,l}(z) 
    - \sum_{k=\kappa,l=\ell}^{\kappa+1,\ell+1} W_{k,l} \Lambda_{k,l}(z) \right|\\
  &\leq \sum_{k=\kappa,l=\ell}^{\kappa+1,\ell+1} \Lambda_{k,l}(z) \left| W_{k,l} - W_{k_0,l_0} \right|,
\end{align*}
Using, again, the differentiability properties of $w$,
\[
  | W_{k,l} - W_{k_0,l_0}| \leq h \int_Q \int_0^1 
  \left| \GRAD w \left( sz_{k,l} + (1-s)(z_{k_0,l_0}+\zeta) \right) \right|
  \diff s \psi(\zeta)\diff \zeta,
\]
which, employing the facts that $|\Lambda_{k,l}(z)|\leq 1$ and 
$\| \psi \|_{L^\infty} = (h_1h_2)^{-1}$, implies
\[
  \sum_{\kappa,\ell=0}^{N_1-1,N_2-1} \int_{T_{\kappa,\ell}} | \Pi_h w - W_{k_0,l_0} | 
  \leq c h \int_\Omega |\GRAD w |.
\]
The proof concludes via a density argument with $\calC^1$ functions.
\end{proof}

\section{TV diminishing interpolant: non-periodic domains}
\label{sec:nonperiodic}

Let us describe two possible ways of modifying our construction in the case $\Omega = (0,1)^2$.
Both constructions will satisfy \eqref{eq:tvd-def} and possess optimal approximation properties
in the space $\BV$, so that all the results presented in Section~\ref{sec:Applications} still
hold in this case.
Property \eqref{eq:secondorder} however, will no longer be valid in general.

\subsection{Interpolation based on homotetic transformations of the domain}
\label{sub:homotety}
Set $\epsilon = \tfrac12\max\{h_1,h_2\}$, we define
\[
  \Omega_\epsilon = \left(-\epsilon, 1+\epsilon \right)^2
\]
and notice that $\Omega \Subset \Omega_\epsilon$. For a function $w\in\BV$ we define
\[
  w_\epsilon(y) = \frac1{1+2\epsilon} w \left( \frac{y+\epsilon}{1+2\epsilon} \right),
  \quad y \in \Omega_\epsilon.
\]
Notice that, employing the change of variables $x=(y+\epsilon)/(1+2\epsilon)$ valid for functions of
bounded variation \cite[Theorem 3.16]{MR1857292}, we obtain
\begin{align*}
  \TV_{\Omega_\epsilon}( w_\epsilon )
  &= \frac1{(1+2\epsilon)^2}
  \int_{\Omega_\epsilon} \left|\GRAD w \left( \frac{y+\epsilon}{1+2\epsilon} \right) \right| \diff y
  = \frac{(1+2\epsilon)^2}{(1+2\epsilon)^2} \int_\Omega |\GRAD w(x) | \diff x \\
  &= \TV_\Omega (w),
\end{align*}
where by $\TV_A(w)$ we have denoted the total variation of the function $w$ over the set $A$.
Clearly $w_\epsilon |_\Omega \in \BV$.

We define $\calI_h w = \Pi_h w_\epsilon$, where the operator $\Pi_h$ is defined as in
Section~\ref{sec:interpolant}. Notice that the averaging procedure necessary for the definition of
$\Pi_h w_\epsilon$ makes sense, since the function $w_\epsilon$ is defined in $\Omega_\epsilon$.
The results of Section~\ref{sec:interpolant} then imply
\[
  \TV_\Omega(\calI_h w ) = \TV_\Omega(\Pi_h w_\epsilon) \leq \TV_{\Omega_\epsilon}(w_\epsilon)
  = \TV_\Omega(w).
\]
The approximation properties \eqref{eq:BVoptimalapprox} and \eqref{eq:Lqapprox}
of $\calI_h$ can be obtained similarly.

\begin{rem}[Second order approximation]
The approximation property \eqref{eq:secondorder} cannot hold for $\calI_h$.
In fact if $w\equiv c$, then $\calI_h w = c/(1+2\epsilon)$.
This also shows that the
operator does not preserve constants, which might be an undesirable feature.
\end{rem}

\subsection{Interpolation based on rescaled local averages}
\label{sub:localaverage}
The construction of \S\ref{sub:homotety}, although simple, does not preserve constants. For this reason
we consider a slightly more complicated procedure. It is a modification of the construction
of Section~\ref{sec:interpolant} to take into account the effects of the boundary.

The operator $\calC_h : \Lun \ni w \mapsto \calC_h w \in X_h$ is then such that
if $z_{k,l}$ is an internal node, the value of the interpolant is as in \eqref{eq:defofPi}.
On the other hand, if $z_{k,l}$ is a boundary node, we define
\[
  Q_\Omega(k,l) = (z_{k,l}+Q)\cap \Omega,
\]
and
\begin{equation}
  W_{k,l} = \frac1{|Q_\Omega(k,l)|} \int_{Q_\Omega(k,l)} w(y) \diff y.
\label{eq:defofWklbdry}
\end{equation}
Observe that, $|Q|/|Q_\Omega(k,l)|=4$ for vertices of $\Omega$ and $|Q|/|Q_\Omega(k,l)|=2$
for the rest of boundary nodes.
This rescaling allows us to take into account the lack of symmetry on the interaction with neighboring
nodes.

\begin{thm}[TV diminishing: non-periodic case]
\label{thm:tvdnonperiodic}
The operator $\calC_h : \Lun \rightarrow X_h$ defined by $\calC_h w (z_{k,l})=W_{k,l}$ with
$W_{k,l}$ as in \eqref{eq:defofWkl} for $k=\overline{1,N_1-1}$, $l=\overline{1,N_2-1}$ and by 
\eqref{eq:defofWklbdry} otherwise, satisfies \eqref{eq:tvd-def}.
\end{thm}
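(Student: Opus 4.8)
The plan is to deduce the non-periodic statement from the already established periodic Theorem~\ref{thm:tvd} by an even reflection argument, the point being that the rescaling factors $2$ and $4$ in \eqref{eq:defofWklbdry} are exactly the measure factors generated by mirror reflection across the sides of $\Omega$. I would extend $w$ to the torus $\widetilde\Omega$ of side length $2$, obtained by reflecting $\Omega=(0,1)^2$ across each of its four edges and tiling the plane with period $2$, by even reflection; call the resulting function $\widetilde w$, and equip $\widetilde\Omega$ with the periodic mesh of $2N_i$ cells of the same sizes $h_i$. The goal is to show that $\calC_h w$ is nothing but the restriction to $\Omega$ of the periodic interpolant $\Pi_h\widetilde w$, and that both $\widetilde w$ and $\Pi_h\widetilde w$ are symmetric, so that the factor $4$ cancels on both sides of the inequality.

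First I would record the reflection identities at the level of nodal values. For a strictly interior node one has $z_{k,l}+Q\subset\Omega$, so the weighted average defining $W_{k,l}$ in \eqref{eq:defofWkl} coincides with $\frac1{|Q|}\int_{z_{k,l}+Q}\widetilde w$. For an edge node, say $z_{0,l}$, evenness of $\widetilde w$ across $\{z^1=0\}$ gives $\int_{z_{0,l}+Q}\widetilde w=2\int_{Q_\Omega(0,l)}w$, and since $|Q|=2|Q_\Omega(0,l)|$ this equals exactly $W_{0,l}$ as in \eqref{eq:defofWklbdry}; the corners produce the factor $4$ in the same fashion. Hence the nodal values of $\Pi_h\widetilde w$ over $\bar\Omega$ agree with those prescribed in the statement, whence $\Pi_h\widetilde w|_\Omega=\calC_h w$. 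Because $\widetilde w$ and the region $Q$ are symmetric about each fold line, the nodal values of $\Pi_h\widetilde w$ are symmetric as well (e.g.\ $W_{-k,l}=W_{k,l}$), and since the $\polQ_1$ interpolant of symmetric data is symmetric, $\Pi_h\widetilde w$ is the even reflection of $\calC_h w$.

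It then remains to transfer the total variations. Even reflection keeps $\widetilde w\in\BV(\widetilde\Omega)$ and produces no singular mass on the fold lines, because the one-sided traces of $w$ there coincide after reflection \cite{MR1857292}; consequently $\TV_{\widetilde\Omega}(\widetilde w)=4\,\TV_\Omega(w)$, and likewise $\TV_{\widetilde\Omega}(\Pi_h\widetilde w)=4\,\TV_\Omega(\calC_h w)$, the latter using the symmetry just established together with the global continuity of the $\polQ_1$ function $\Pi_h\widetilde w$, which carries no variation along the folds. Applying Theorem~\ref{thm:tvd} on $\widetilde\Omega$—whose proof is purely local and never uses the unit size of the torus, hence applies verbatim to the $2N_1\times 2N_2$ mesh—yields $\TV_{\widetilde\Omega}(\Pi_h\widetilde w)\le\TV_{\widetilde\Omega}(\widetilde w)$, and dividing by $4$ gives \eqref{eq:tvd-def} for $\calC_h$.

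The main point to get right, and the only genuinely delicate step, is the clean doubling $\TV_{\widetilde\Omega}(\widetilde w)=4\,\TV_\Omega(w)$: one must verify that the matching of traces across each fold removes any jump contribution so that no spurious variation is created along the reflection lines, and that the measure factors $2$ and $4$ in \eqref{eq:defofWklbdry} are precisely those forced by the reflection. A direct argument mimicking the proof of Theorem~\ref{thm:tvd} is also possible, but there the differences $W_{\kappa+1,l}-W_{\kappa,l}$ coupling a full-cell average to a rescaled half-cell average no longer reduce to a single line integral of $\partial_1 w$, which makes the bookkeeping considerably heavier; the reflection route avoids this entirely and, since Theorem~\ref{thm:tvd} already covers general $\BV$ data, it also dispenses with a separate $\calC^1$ density reduction.
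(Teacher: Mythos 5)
Your proof is correct, but it takes a genuinely different route from the paper's. The paper argues directly: it splits $Q$ into the four quadrants $Q^{\pm\pm}$, introduces the half-rectangles $R^{-+}$ and $R^{+-}$, and on each boundary cell (illustrated on $T_{0,0}$ via \eqref{eq:sharp}) bounds the difference between a rescaled quarter- or half-cell average and a full-cell average by line integrals of $\partial_1 w$, after which the summation runs as in the periodic case because $|Q^{++}|=|Q^{+-}|=|Q^{-+}|=|Q^{--}|=\tfrac14|Q|$. You instead reduce to Theorem~\ref{thm:tvd} by even reflection onto the torus of side $2$, and your key identifications check out: for an edge node, evenness gives $\int_{z_{0,l}+Q}\widetilde w=2\int_{Q_\Omega(0,l)}w$ while $|Q|=2|Q_\Omega(0,l)|$ (factor $4$ at corners), so $\calC_h w=\Pi_h\widetilde w|_\Omega$; the identity $\TV_{\widetilde\Omega}(\widetilde w)=4\,\TV_\Omega(w)$ holds by the standard BV gluing theorem since the one-sided traces match along the fold lines, hence no jump mass is created there; $\Pi_h\widetilde w$ is Lipschitz and symmetric, so its variation likewise quadruples; and the proof of Theorem~\ref{thm:tvd} is indeed insensitive to the side length of the torus. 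What your route buys: it explains conceptually why the rescaling factors $2$ and $4$ in \eqref{eq:defofWklbdry} are exactly the right ones (they are forced by reflection), it avoids the boundary-cell bookkeeping that the paper only sketches for a single cell, and it covers general $w\in\BV$ at once with no separate density step, since Theorem~\ref{thm:tvd} already handles that. What the paper's direct route buys: it is self-contained (no reflection or trace-matching facts about BV functions are invoked), and its local estimates near the boundary are what the subsequent remark on second order approximation in $\calN_h$ exploits, information which your global reflection argument does not furnish as directly.
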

\begin{proof}
We split $Q$ into four congruent rectangles
\begin{align*}
  Q^{++} &= \{ z \in Q : z^1>0, \ z^2 > 0 \},  &Q^{+-} &= \{ z \in Q : z^1>0, \ z^2 < 0 \}, \\
  Q^{-+} &= \{ z \in Q : z^1<0, \ z^2 > 0 \},  &Q^{--} &= \{ z \in Q : z^1<0, \ z^2 < 0 \},
\end{align*}
and introduce
\[
  R^{-+} = Q^{-+} \cup Q^{++}, \qquad R^{+-} = Q^{+-} \cup Q^{++}.
\]

It is necessary to compute the contribution of the cells that intersect the boundary.
To illustrate the procedure we consider $T_{0,0}$ and recall \eqref{eq:sharp} to write:
\begin{align*}
    \frac2{h_2} \int_{T_{0,0}} |\partial_1 \calC_h w | &\leq
        \left| \frac1{|R^{-+}|} \int_{R^{-+}} w(z_{1,0} + \zeta) \diff \zeta
      - \frac1{|Q^{++}|} \int_{Q^{++}} w(z_{0,0} + \zeta) \diff \zeta \right|
      \\
      &+  \left|\frac1{|Q|}\int_Q w(z_{1,1} + \zeta) \diff \zeta
      - \frac1{|R^{+-}|} \int_{R^{+-}} w(z_{0,1}+\zeta) \diff \zeta \right|
    .
\end{align*}
The first two terms on the right hand side can be estimated as
\begin{multline*}
  \left| \frac2{h_1 h_2} \int_{R^{-+}} w(z_{1,0} + \zeta) \diff \zeta
  - \frac4{h_1 h_2} \int_{Q^{++}} w(z_{0,0} + \zeta) \diff \zeta \right| \\
  \leq \frac2{h_2}
  \int_{Q^{++}} \int_0^1 |\partial_1 w( s z_{0,1} + (1-s)z_{0,0} + \zeta )| \diff \zeta \diff s \\
  + \frac2{h_2} \int_{Q^{-+}} \int_0^1 
    \left| \partial_1 w \left( sz_{1,0}+(1-s)(z_{0,0} + \tfrac{h_1}2 \be_1 + \zeta
    ) \right) \right| \diff \zeta \diff s.
\end{multline*}
The summation procedure is now as before. For this it suffices to notice
that $|Q^{++}|=|Q^{+-}|=|Q^{-+}|=|Q^{--}|=\tfrac14 |Q|$.
\end{proof}

The operator $\calC_h$ is $L^p$-stable as in \eqref{eq:Lpstab}.
Its approximation properties are as in \eqref{eq:BVoptimalapprox} and \eqref{eq:Lqapprox}. For
brevity we skip the proofs, as they repeat the arguments we have already presented.
Property \eqref{eq:secondorder} however, will not be valid in general.

\begin{rem}[Second order approximation]
Denote
\[
  \calN_\vare = \left\{ x \in \Omega :\ \dist(x, \partial\Omega) < \vare \right\}.
\]
The proof of Theorem~\ref{thm:tvdnonperiodic} shows that if
$w \in W^1_\infty(\calN_\vare)$ and $\tfrac32 h \leq \vare$, then
\[
  \| w - \calC_h w \|_{L^\infty(\calN_h)} \leq c h |w|_{W^1_\infty(\calN_{3h/2})}.
\]
Therefore, since $|\calN_h| \leq c h$, we deduce
\[
  \| w - \calC_h w \|_{L^1} \leq c h^2.
\]
\end{rem}

\section{Applications}
\label{sec:Applications}
In this Section we present two applications of the operators constructed in Sections \ref{sec:interpolant}
and \ref{sec:nonperiodic}.
The fact that the operators satisfy \eqref{eq:tvd-def} and \eqref{eq:BVoptimalapprox}
allows us to not only improve on existing results
but also to have simpler arguments.

\subsection{Total Variation Minimization}
\label{sec:minmize}
Let $\alpha>0$ and $f \in L^\infty(\Omega)$. Set
\begin{equation}
  E(w) = \TV(w) + \frac\alpha2 \| w - f \|_{L^2}^2,
\label{eq:tvmin}
\end{equation}
and consider the problem of finding $\ve \in \BV$ that minimizes this functional. This problem arises,
for instance, in connection with image processing \cite{Rudin1992259}. 
In \cite{BartelsTV} S.~Bartels presents a first order finite element method for the approximation
of minimizers of \eqref{eq:tvmin} as well as an iterative scheme for their computation, thereby
extending prior work of A.~Chambolle and T.~Pock \cite{MR2782122} and
B.~Lucier and J.~Wang \cite{MR2792398}.
The main convergence result of this work reads as follows: Let
\begin{equation}
  \ve = \argmin_{w \in \BV} E(w),
\label{eq:defofu}
\end{equation}
and
\begin{equation}
  v_h = \argmin_{w_h \in X_h } E(w_h).
\label{eq:defofuh}
\end{equation}
If $\ve \in B^\beta_\infty(\Ldeux)$, with $\beta \in (0,1]$, then
\begin{equation}
  \| \ve - v_h \|_{L^2} \leq c h^{\frac\beta{2(1+\beta)} }.
\label{eq:knowntvmin}
\end{equation}
Let us, with the help our TV diminishing operator, improve on this result,
which is at best $\calO(h^{1/4})$.

\begin{thm}[Total variation minimization]
\label{thm:tvmin}
Let $\ve$ be as in \eqref{eq:defofu} and $v_h$ as in \eqref{eq:defofuh}. Assume that
$\ve \in \BV \cap L^\infty(\Omega)$, then
\[
  \| \ve - v_h \|_{L^2} \leq \sqrt{2} h^{1/2} \left( \| \ve \|_{L^\infty} + \| f \|_{L^\infty} \right)^{1/2}
  \TV(\ve)^{1/2}.
\]
\end{thm}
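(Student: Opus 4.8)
The plan is to exploit the fact that $\ve$ minimizes $E$ over all of $\BV$, while $v_h$ minimizes $E$ only over the discrete subspace $X_h \subset \BV$. Since $X_h \subset \BV$, we have $E(\ve) \leq E(v_h)$, and the quasi-optimality of $v_h$ means we may compare $E(v_h)$ against $E(w_h)$ for any convenient competitor $w_h \in X_h$. The natural choice is $w_h = \Pi_h \ve$, because the TV diminishing property~\eqref{eq:tvd-def} controls its total variation for free, and the approximation estimate~\eqref{eq:BVoptimalapprox} controls its $L^1$ distance to $\ve$. The main leverage will come from the uniform convexity of the fidelity term $\tfrac\alpha2\|w - f\|_{L^2}^2$, which lets us convert a gap in energy values into a bound on $\|\ve - v_h\|_{L^2}^2$.

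The first step is to record the convexity inequality. Since $E$ is convex and $\ve$ is its global minimizer over $\BV$, the quadratic part gives, for any $w \in \BV$,
\begin{equation}
  E(w) - E(\ve) \geq \frac\alpha2 \|w - \ve\|_{L^2}^2.
\label{eq:strongconv}
\end{equation}
This is the standard strong-convexity estimate: the linear term in the expansion of the fidelity about $\ve$ combines with the TV term to give a nonnegative first-order contribution at the minimizer, leaving the quadratic remainder. Applying~\eqref{eq:strongconv} with $w = v_h$ yields
\[
  \frac\alpha2 \|v_h - \ve\|_{L^2}^2 \leq E(v_h) - E(\ve).
\]

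The second step is to bound $E(v_h) - E(\ve)$ from above. By optimality of $v_h$ in $X_h$ and the admissibility of $\Pi_h \ve \in X_h$,
\[
  E(v_h) - E(\ve) \leq E(\Pi_h \ve) - E(\ve)
  = \TV(\Pi_h\ve) - \TV(\ve)
  + \frac\alpha2\left( \|\Pi_h\ve - f\|_{L^2}^2 - \|\ve - f\|_{L^2}^2 \right).
\]
The TV difference is $\leq 0$ by~\eqref{eq:tvd-def}, so it may be discarded. For the fidelity difference I would factor the difference of squares and bound the sum factor $\|\Pi_h\ve - f\|_{L^2} + \|\ve - f\|_{L^2}$ crudely in $L^\infty$ using $\|\Pi_h\ve\|_{L^\infty}\leq\|\ve\|_{L^\infty}$ (the $L^\infty$-stability~\eqref{eq:Lpstab}), while keeping the difference factor $\|\Pi_h\ve - \ve\|_{L^1}$ exposed to absorb the $L^1$ approximation estimate~\eqref{eq:BVoptimalapprox}. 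Writing the fidelity gap as
\[
  \frac\alpha2 \int_\Omega (\Pi_h\ve - \ve)\,(\Pi_h\ve + \ve - 2f),
\]
the second factor is bounded pointwise by $2(\|\ve\|_{L^\infty} + \|f\|_{L^\infty})$, so the gap is at most $\alpha(\|\ve\|_{L^\infty}+\|f\|_{L^\infty})\,\|\Pi_h\ve - \ve\|_{L^1} \leq \alpha c\, h\,(\|\ve\|_{L^\infty}+\|f\|_{L^\infty})\,\TV(\ve)$. Combining with the first step cancels the factors of $\alpha$ and gives $\|\ve - v_h\|_{L^2}^2 \leq 2c\,h\,(\|\ve\|_{L^\infty}+\|f\|_{L^\infty})\,\TV(\ve)$; taking square roots yields the stated $h^{1/2}$ rate.

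The step I expect to require the most care is the strong-convexity inequality~\eqref{eq:strongconv}, since the TV term is only convex (not differentiable) and one cannot naively write a first-order optimality condition. The clean way is to avoid subdifferentials altogether: test $E$ along the segment $\ve + t(w - \ve)$, use that $\TV$ is convex so $\TV(\ve + t(w-\ve)) \leq (1-t)\TV(\ve) + t\,\TV(w)$, expand the quadratic fidelity term exactly in $t$, and use $\tfrac{d}{dt}E(\ve + t(w-\ve))|_{t=0^+}\geq 0$ (which holds because $\ve$ is a minimizer) to cancel the linear-in-$t$ contribution, leaving precisely the quadratic lower bound. This is elementary but is the only place where the variational characterization of $\ve$ is genuinely used, so it must be handled carefully rather than by appeal to a gradient.
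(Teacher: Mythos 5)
Your proposal is correct and follows essentially the same route as the paper: strong convexity of $E$ at the global minimizer $\ve$, comparison of $E(v_h)$ with the competitor $\Pi_h\ve\in X_h$, cancellation of the TV terms via \eqref{eq:tvd-def}, and the factorization $\| \Pi_h \ve - f \|_{L^2}^2 - \| \ve - f \|_{L^2}^2 = \scl \Pi_h \ve - \ve, \Pi_h \ve + \ve - 2f \scr$, bounded using $L^\infty$-stability \eqref{eq:Lpstab} together with the $L^1$ approximation estimate \eqref{eq:BVoptimalapprox}. The only differences are cosmetic: you spell out the justification of the strong-convexity inequality, which the paper merely asserts, and you honestly carry the geometric constant $c$ from \eqref{eq:BVoptimalapprox}, whereas the paper's stated factor $\sqrt{2}$ implicitly takes that constant equal to $1$.
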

\begin{proof}
Owing to the strict convexity of $E$ and the fact that $v_h$ is a minimizer over $X_h$, we have
\begin{align*}
  \frac\alpha2 \| \ve - v_h \|_{L^2}^2 &\leq E(v_h) - E(\ve)
    \leq E(\Pi_h \ve) - E(\ve) \\
    &\leq \TV(\Pi_h \ve) - \TV(\ve) 
    + \frac\alpha2 \left( \| \Pi_h \ve - f \|_{L^2}^2 - \| \ve - f \|_{L^2}^2 \right).
\end{align*}
Since $\Pi_h$ satisfies \eqref{eq:tvd-def} and \eqref{eq:Lpstab},
\begin{align*}
  \| \ve - v_h \|_{L^2}^2 &\leq
  \scl \Pi_h \ve - \ve , \Pi_h \ve + \ve - 2f \scr
  \leq 2\| \Pi_h \ve- \ve \|_{L^1} \left( \| \ve \|_{L^\infty}+  \| f \|_{L^\infty} \right) \\
  &\leq 2 h \left( \| \ve \|_{L^\infty}+  \| f \|_{L^\infty} \right)\TV(\ve),
\end{align*}
where, to obtain the last inequality, we used Theorem~\ref{thm:approx}.
\end{proof}

\begin{rem}[Reduced regularity]
The conclusion of Theorem~\ref{thm:tvmin} is indeed an improvement over \eqref{eq:knowntvmin}.
It is not difficult to show that
$\BV \cap L^\infty(\Omega) \hookrightarrow B^{1/2}_\infty(\Ldeux)$;
see \cite[Lemma 38.1]{MR2328004} for a proof and, in some sense, the converse statement. In this
case \eqref{eq:knowntvmin} yields a $\calO(h^{1/6})$ error estimate. Our results yields a
rate which, given the smoothness of $\ve$, is optimal.
\end{rem}

\subsection{TV flows}
The equation
\begin{equation}
  \ue_t = \DIV \left( \dfrac{ \GRAD \ue  }{|\GRAD \ue |} \right),
\label{eq:tvflow}
\end{equation}
supplemented with suitable initial and boundary conditions is known as the \emph{TV flow}, since it can be
interpreted as the $L^2$-subgradient flow of the functional $\TV$. We refer the reader to
\cite{MR2033382,MR0348562,MR2746654,MR1865089}
for the analysis of this problem and to \cite{TVFlow,MR1994316,MR2194526} for its
discretization. Concerning applications this problem,
supplemented with a smooth strictly convex $p$-Laplacian term,
arises, \eg, in the modeling of 
grain boundary motion \cite{Kobayashi2000141}; facet formation and evolution \cite{PhysRevB.78.235401};
and electromigration \cite{Fu1997259}.

The precise mathematical meaning of \eqref{eq:tvflow} is as a subgradient flow. In other words,
given $\ue_0 \in \Ldeux$, we seek a function $\ue: [0,T] \rightarrow \Ldeux\cap\BV$ such that
\begin{equation}
  \begin{dcases}
    \scl \ue_t, \ue - w \scr + \TV(\ue) - \TV(w) \leq 0, & \forall w \in \Ldeux\cap\BV, \\
    \ue|_{t=0} = \ue_0.
  \end{dcases}
\label{eq:tvflowvarineq}
\end{equation}

In prior work \cite{TVFlow}, we provided an error analysis for discretizations of
\eqref{eq:tvflowvarineq} as well as a convergent solution scheme for it.
We considered the fully discrete scheme:
Find $\{u_h^k\}_{k=0}^K \subset X_h$ such that
\begin{equation}
  \begin{dcases}
    \scl \frac{ u_h^{k+1} - u_h^k }\dt, u_h^{k+1} - w_h \scr 
    + \TV(u_h^{k+1}) - \TV(w_h) \leq 0, & \forall w_h \in X_h, \\
    u_h^0 = \calI_h \ue_0.
  \end{dcases}
\label{eq:tvflowdiscrete}
\end{equation}
where $\dt >0$ is a time-step, $K = [T/\dt]$ and $\calI_h$ is some interpolation operator.
Our main convergence result stated that if $\ue_0 \in L^\infty(\Omega)\cap\BV$ and
$\TV(\calI_h \ue_0 ) \leq c < \infty$ uniformly in $h$, then
\[
  \| \ue - \widehat u_h \|_{L^\infty(L^2)} \leq   \| \ue_0 - \calI_h u_0 \|_{L^2}
  + c \left( \dt^{1/2} + h^{1/6} \right),
\]
where $\widehat u_h$ denotes the piecewise linear (in time) interpolant of the sequence $\{u_h^k\}$.
This result, as \cite{MR1737503,MR1377244} show, is optimal with respect to time, given the
regularity of the initial data.

Let us, with the help of the TV diminishing operator of Sections \ref{sec:interpolant}
and \ref{sec:nonperiodic} improve the convergence rate in space.

\begin{thm}[Convergence of TV flows]
Let $\ue$ solve \eqref{eq:tvflowvarineq}. Assume that the initial data $\ue_0 \in L^\infty(\Omega)\cap\BV$.
Let $\{ u_h^k \}_{k=0}^K \subset X_h$ solve \eqref{eq:tvflowdiscrete}, with $u_h^0 = \Pi_h \ue_0$.
Then
\begin{equation}
  \| \ue - \widehat u_h \|_{L^\infty(L^2)} \leq c \left( \dt^{1/2} + h^{1/4} \right).
\label{eq:convrageTVflow}
\end{equation}
\end{thm}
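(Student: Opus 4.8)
The plan is to redo the error analysis of \eqref{eq:tvflowdiscrete} carried out in \cite{TVFlow}, but to exploit the two features of $\Pi_h$ established above that the operator used there lacked: the TV diminishing property \eqref{eq:tvd-def} (Theorem~\ref{thm:tvd}) and the $L^2$ approximation rate that follows from \eqref{eq:Lqapprox}. Taking $q=2$, $p=\infty$ (so $s=\tfrac12$) in Theorem~\ref{thm:approx} gives, for each $t$,
\[
  \| \ue(t) - \Pi_h \ue(t) \|_{L^2} \leq c\, h^{1/2}\, \TV(\ue(t))^{1/2}\, \| \ue(t) \|_{L^\infty}^{1/2}.
\]
Since the TV flow dissipates $\TV$ and is $L^\infty$-contractive, $\TV(\ue(t))\le\TV(\ue_0)$ and $\|\ue(t)\|_{L^\infty}\le\|\ue_0\|_{L^\infty}$ uniformly in $t$, so $\eta^{k}:=\ue(t_k)-\Pi_h\ue(t_k)$ satisfies $\|\eta^k\|_{L^2}\le c\,h^{1/2}$ uniformly in $k$. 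Moreover, because $u_h^0=\Pi_h\ue_0$, Theorem~\ref{thm:tvd} yields $\TV(u_h^0)\le\TV(\ue_0)$ \emph{for free}; this is precisely the uniform bound that had to be imposed as a hypothesis in \cite{TVFlow}.

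Next I derive the error relation. Writing $D_t v^{k+1}=(v^{k+1}-v^k)/\dt$ and $e^{k+1}=\ue(t_{k+1})-u_h^{k+1}$, I test \eqref{eq:tvflowdiscrete} with $w_h=\Pi_h\ue(t_{k+1})\in X_h$ and \eqref{eq:tvflowvarineq} at $t=t_{k+1}$ with $w=u_h^{k+1}\in\BV\cap\Ldeux$, and add the two. The decisive point is that the four total variation terms combine into $\TV(\Pi_h\ue(t_{k+1}))-\TV(\ue(t_{k+1}))$, which is $\le 0$ by \eqref{eq:tvd-def}; hence the TV consistency error, the source of the suboptimal $h^{1/6}$ in \cite{TVFlow}, disappears entirely. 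Inserting $\ue(t_{k+1})-\Pi_h\ue(t_{k+1})=\eta^{k+1}-e^{k+1}$ and splitting $\ue_t(t_{k+1})-D_t u_h^{k+1}=D_t e^{k+1}+\tau^{k+1}$, with $\tau^{k+1}=\ue_t(t_{k+1})-D_t\ue(t_{k+1})$ the time truncation error, leads to the discrete dissipation inequality
\[
  \tfrac1{2\dt}\!\left(\|e^{k+1}\|_{L^2}^2-\|e^k\|_{L^2}^2\right)
  \le -\scl \tau^{k+1},e^{k+1}\scr - \scl D_t u_h^{k+1},\eta^{k+1}\scr.
\]

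I then multiply by $\dt$ and sum over $k=0,\dots,m-1$ for arbitrary $m\le K$. The left side telescopes to $\tfrac12\|e^m\|_{L^2}^2-\tfrac12\|e^0\|_{L^2}^2$, and $\|e^0\|_{L^2}=\|\ue_0-\Pi_h\ue_0\|_{L^2}\le c\,h^{1/2}$. The truncation contribution is controlled exactly as in \cite{TVFlow} using only $\ue_t\in L^2(0,T;\Ldeux)$, producing the optimal $\dt$ term. For the interpolation term I use the discrete energy estimate from testing \eqref{eq:tvflowdiscrete} with $w_h=u_h^k$, namely $\sum_k\dt\|D_t u_h^{k+1}\|_{L^2}^2\le\TV(u_h^0)\le\TV(\ue_0)$ (again via \eqref{eq:tvd-def}), together with Cauchy--Schwarz:
\[
  \Big|\sum_{k}\scl u_h^{k+1}-u_h^k,\eta^{k+1}\scr\Big|
  \le \Big(\sum_k \dt\|D_t u_h^{k+1}\|_{L^2}^2\Big)^{1/2} T^{1/2} \max_k\|\eta^{k+1}\|_{L^2}
  \le c\,h^{1/2}.
\]
Since all bounds on the right are uniform in $m$, collecting terms gives $\max_{0\le m\le K}\|e^m\|_{L^2}^2\le c(\dt + h^{1/2})$, and taking square roots yields the nodal bound $c(\dt^{1/2}+h^{1/4})$; the passage to $\|\ue-\widehat u_h\|_{L^\infty(L^2)}$ for the piecewise linear interpolant $\widehat u_h$ is then handled as in \cite{TVFlow}.

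The main obstacle is not the spatial estimate, which is now essentially algebraic once \eqref{eq:tvd-def} and \eqref{eq:Lqapprox} are available, but making the coupling of the two variational inequalities rigorous under the weak time regularity of subgradient flows: $\ue_t$ exists only in $L^2(0,T;\Ldeux)$, so the pairings $\scl\ue_t(t_{k+1}),\cdot\scr$ and the control of $\tau^{k+1}$ must be read and summed in the averaged sense of \cite{TVFlow}. I would therefore reuse that temporal machinery verbatim and present only the spatial gain, emphasizing that the improvement to $h^{1/4}$ stems precisely from \eqref{eq:tvd-def} annihilating the TV consistency term and from the $h^{1/2}$ rate in \eqref{eq:Lqapprox} entering the \emph{squared} error.
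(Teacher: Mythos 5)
The spatial half of your argument is sound, and it is in fact the same mechanism the paper uses: test the discrete inequality with the TVD interpolant of the ``exact'' solution so that \eqref{eq:tvd-def} annihilates every total variation term, then bound the surviving interpolation term by combining the $h^{1/2}$ estimate of Theorem~\ref{thm:approx} (with $q=2$, $p=\infty$, which requires an $L^\infty\cap BV$ bound on the solution being interpolated) with an a priori bound on $D_t u_h^{k+1}$ obtained by testing \eqref{eq:tvflowdiscrete} with $w_h=u_h^k$. Your summed Cauchy--Schwarz version of that last step, using $\sum_k\dt\|D_t u_h^{k+1}\|_{L^2}^2\leq\TV(u_h^0)\leq\TV(\ue_0)$, is perfectly fine and arguably more robust than the paper's uniform-in-$k$ bound $\|D_t u_h^{k+1}\|_{L^2}\leq c\,\TV(\ue_0)$; either way one gets $h^{1/2}$ on the squared error, hence $h^{1/4}$.

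The genuine gap is in your temporal architecture. You couple \eqref{eq:tvflowdiscrete} directly with \eqref{eq:tvflowvarineq} evaluated at $t=t_{k+1}$, which forces you to make sense of $\scl \ue_t(t_{k+1}),\cdot\scr$ pointwise in time (the inequality holds only for a.e.\ $t$) and to control the truncation error $\tau^{k+1}=\ue_t(t_{k+1})-D_t\ue(t_{k+1})$. Your claim that $\sum_k\dt\scl\tau^{k+1},e^{k+1}\scr$ is $O(\dt)$ ``using only $\ue_t\in L^2(0,T;\Ldeux)$'' is unsupported and, as a truncation argument, false: any such bound needs a modulus of continuity of $\ue_t$ (something like $\ue_{tt}\in L^1(L^2)$ or $\ue_t$ of bounded variation in time), which the TV flow does not possess for $\ue_0\in\BV\cap L^\infty(\Omega)$; with only $\ue_t\in L^2(L^2)$ an averaged version of your argument degrades to $O(\dt^{1/2})$ on the \emph{squared} error, i.e.\ $\dt^{1/4}$, not $\dt^{1/2}$. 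The optimal $O(\dt^{1/2})$ rate of \cite{MR1737503,MR1377244} is not a truncation estimate at all: it is a convexity argument that compares the continuous flow with its implicit Euler semidiscretization \emph{in the same space}. This is precisely why the paper inserts the semidiscrete sequence $\{u^k\}\subset\Ldeux\cap\BV$ as an intermediary: the time error $\|\ue-\widehat u\|_{L^\infty(L^2)}\leq c\,\dt^{1/2}$ is quoted from those references as a black box, and the spatial comparison of $u^k$ with $u_h^k$ couples two schemes on the same time grid, so no time derivative of the exact solution and no truncation error ever appears --- the only term left after \eqref{eq:tvd-def} cancels the TV contributions is the interpolation term $\|u^{k+1}-\Pi_h u^{k+1}\|_{L^2}$ (bounded via the $L^\infty\cap\BV$ regularity of $\{u^k\}$ cited from \cite{TVFlow}). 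Your closing remark that you would ``reuse the temporal machinery of \cite{TVFlow} verbatim'' concedes the point: making that rigorous means reorganizing the proof around the semidiscrete intermediary, i.e.\ recovering the paper's proof; as written, your direct coupling does not deliver the $\dt^{1/2}$ term.
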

\begin{proof}
The error of the time discretization is obtained with standard techniques, \cf \cite{MR1737503,MR1377244}.
For this reason we concentrate on the error of space discretization and compare the solution of
\eqref{eq:tvflowdiscrete} with the sequence $\{u^k\}_{k=0}^K \subset \Ldeux \cap \BV$, solution of
the semidiscrete problem
\[
  \begin{dcases}
    \scl \frac{ u^{k+1} - u^k }\dt, u^{k+1} - w \scr 
    + \TV(u^{k+1}) - \TV(w) \leq 0, & \forall w \in \Ldeux \cap \BV, \\
    u^0 = \ue_0.
  \end{dcases}  
\]
Set $w = u_h^{k+1}$ in this inequality and $w_h = \Pi_h u^{k+1}$ in \eqref{eq:tvflowdiscrete} and add them.
Denoting $e^k = u^k - u_h^k$, we obtain 
\[
  \| e^{k+1} \|_{L^2}^2 + \| e^{k+1} - e^k \|_{L^2}^2 \leq \| e^k \|_{L^2}^2 +
  2\dt \left\| \frac{ u_h^{k+1} - u_h^k }\dt \right\|_{L^2} \| u^{k+1} - \Pi_h u^{k+1} \|_{L^2}
\]
where we used \eqref{eq:tvd-def} to cancel the TV terms.

Setting $w_h = u_h^k$ in \eqref{eq:tvflowdiscrete} is not difficult to show that 
$\dt^{-1} \| u_h^{k+1} - u_h^k \|_{L^2} \leq c \TV(\ue_0)$ uniformly in $h$.

The results of \cite{TVFlow} show that if $\ue_0 \in L^\infty(\Omega)\cap \BV$, then the solution to
the semidiscrete flow $\{u^k\} \subset L^\infty(\Omega)\cap \BV$.
Consequently, it suffices to invoke Theorem~\ref{thm:approx} and add over $k$.
\end{proof}

\begin{rem}[Error in space]
At this stage one might ask whether an optimal rate of convergence $\calO(h^{1/2})$ can
be obtained for \eqref{eq:tvflowdiscrete}. Our methods show that this is only possible under
higher regularity on the solution. To realize this the reader can repeat our analysis for the 
heat equation (where the arguments are simpler) and convince himself that optimality is a consequence of the
higher regularity that is expected from the solution. 
Unfortunately, the functional $\TV$ does not have any smoothing
properties beyond the ones we have used so it not possible to obtain a better rate of convergence
with the present techniques.
\end{rem}

\bibliographystyle{plain}
\bibliography{biblio}

\end{document}